\newlength\xvec@height%
\newlength\xvec@depth%
\newlength\xvec@width%
\newcommand{\xvec}[2][]{%
  \ifmmode%
    \settoheight{\xvec@height}{$#2$}%
    \settodepth{\xvec@depth}{$#2$}%
    \settowidth{\xvec@width}{$#2$}%
  \else%
    \settoheight{\xvec@height}{#2}%
    \settodepth{\xvec@depth}{#2}%
    \settowidth{\xvec@width}{#2}%
  \fi%
  \def\xvec@arg{#1}%
  \def\xvec@dd{:}%
  \def\xvec@d{.}%
  \raisebox{.2ex}{\raisebox{\xvec@height}{\rlap{%
    \kern.05em
    \begin{tikzpicture}[scale=1]
    \pgfsetroundcap
    \draw (.05em,0)--(\xvec@width-.05em,0);
    \draw (\xvec@width-.05em,0)--(\xvec@width-.15em, .075em);
    \draw (\xvec@width-.05em,0)--(\xvec@width-.15em,-.075em);
    \ifx\xvec@arg\xvec@d%
      \fill(\xvec@width*.45,.5ex) circle (.5pt);%
    \else\ifx\xvec@arg\xvec@dd%
      \fill(\xvec@width*.30,.5ex) circle (.5pt);%
      \fill(\xvec@width*.65,.5ex) circle (.5pt);%
    \fi\fi%
    \end{tikzpicture}%
  }}}%
  #2%
}
\def\GL{\operatorname{GL}}
\def\Gal{\mathrm{Gal}}
\newlength{\ownl}
\newcommand{\Symm}{{\operatorname{Symm}\,}}
\newcommand{\barF}{\overline{{F}}}
\newcommand{\barr}{\overline{{r}}}
 \newcommand{\barmu    }{\overline{\mu}}
\def\RCS$#1: #2 ${\expandafter\def\csname RCS#1\endcsname{#2}}
\newtheorem{thm}[section]{Theorem}
\theoremstyle{definition}
\theoremstyle{remark}
\def\numequation{\addtocounter{subsection}{1}\begin{equation}}
\def\nummultline{\addtocounter{subsubsection}{1}\begin{multline}}
\def\anumequation{\addtocounter{subsection}{1}\begin{equation}}
\title{Automorphy lifting for small $l$ - Appendix B to ``Automorphy of $\Symm^5(\GL(2))$ and base change''}
\author{Luis Dieulefait} \email{ldieulefait@ub.edu} \address{Facultat
  de Matemàtiques, Universitat de Barcelona}
\author{Toby Gee} \email{toby.gee@imperial.ac.uk} \address{Department of
  Mathematics, Imperial College London}
\thanks{The second author was supported in part by a Marie
  Curie Career Integration Grant.}
\begin{document}
\maketitle

In this appendix we prove a slight generalization of Theorem 4.2.1
of~\cite{BLGGT}. It strengthens \emph{loc. cit.} in that it weakens
the assumption that $l\ge 2(n+1)$ to an adequacy hypothesis (which is
automatic if $l\ge 2(n+1)$ by the main result of~\cite{jackapp}).

This theorem can be proved by a straightforward modification of the
proof of Theorem 4.2.1 of~\cite{BLGGT}, using Lemma A.3.1
of~\cite{blggU2} (which was proved by Richard Taylor during the
writing of~\cite{BLGGT}). In order to make the proof straightforward
to read, rather than explaining how to modify the proof of Theorem
4.2.1 of~\cite{BLGGT} using this Lemma, we combine Theorem A.4.1
of~\cite{blggU2} (which is an improvement on Theorem 4.3.1
of~\cite{BLGGT} in exactly the same way that Theorem \ref{mainmlt}
below is an improvement on Theorem 4.2.1 of~\cite{BLGGT}) with Theorem
2.3.1 of~\cite{BLGGT} (which is essentially Theorem 7.1
of~\cite{jack}).

We freely use the notation and terminology of~\cite{BLGGT} without
comment. We would like to thank Florian Herzig for his helpful
comments on an earlier version of this appendix.
        \begin{thm} \label{mainmlt} 
     Let $F$ be an imaginary CM field with maximal totally real subfield $F^+$ and let $c$ denote the non-trivial element of $\Gal(F/F^+)$. Suppose that $l$ is an odd prime, and that $(r,\mu)$ is a regular algebraic, irreducible, $n$-dimensional, polarized representation of $G_F$. 
Let $\barr$ denote the semi-simplification of the reduction
of $r$. 
Suppose that $(r,\mu)$ enjoys the following properties:
   \begin{enumerate}
\item\label{pdiag} $r|_{G_{F_v}}$ is potentially diagonalizable (and so in particular potentially crystalline) for all $v|l$.
\item  The restriction $ \barr(G_{F(\zeta_l)})$ is adequate, and $\zeta_l \not\in F$.
\item\label{auto}  $(\barr,\barmu)$ is either ordinarily automorphic or potentially diagonalizably automorphic.
\end{enumerate}

Then $(r,\mu)$ is potentially diagonalizably automorphic (of level potentially prime to $l$).  
\end{thm}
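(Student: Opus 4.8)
The plan is to obtain Theorem~\ref{mainmlt} by combining, exactly as in the deduction of Theorem 4.2.1 of~\cite{BLGGT} from Theorems 2.3.1 and 4.3.1 of \emph{loc.\ cit.}, the potentially diagonalizable automorphic lifting result of Theorem 2.3.1 of~\cite{BLGGT} (essentially Theorem 7.1 of~\cite{jack}) with the adequacy-strengthened automorphy lifting theorem, Theorem A.4.1 of~\cite{blggU2}. The reason the argument need not be re-run by hand is that Theorem A.4.1 of~\cite{blggU2} already packages the weakening of $l\ge 2(n+1)$ to adequacy inside the lifting step, while Theorem 2.3.1 of~\cite{BLGGT}, being built on~\cite{jack}, is valid under the adequacy hypothesis as well; so the formal deduction is identical to that of Theorem 4.2.1 of~\cite{BLGGT}, only with the adequacy versions of the two inputs.

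First I would read off the local data at $l$ from hypothesis~\eqref{pdiag}: for each place $v\mid l$ put $\rho_v := r|_{G_{F_v}}$, which is a regular, potentially crystalline, potentially diagonalizable lift of $\barr|_{G_{F_v}}$. Using hypothesis~(2) (adequacy of $\barr(G_{F(\zeta_l)})$ and $\zeta_l\notin F$) together with hypothesis~\eqref{auto} (so that $(\barr,\barmu)$ is automorphic, in either of the two permitted senses, these ordinary and potentially diagonalizable cases being precisely the admissible inputs), I would apply Theorem 2.3.1 of~\cite{BLGGT} to the collection $\{\rho_v\}_{v\mid l}$. This produces a polarized, regular algebraic, automorphic $(r',\mu')$ over $F$ with $\overline{r'}\cong\barr$, $\overline{\mu'}=\barmu$, and with $r'|_{G_{F_v}}$ potentially diagonalizable and connected to $\rho_v=r|_{G_{F_v}}$ for every $v\mid l$.

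Finally I would apply Theorem A.4.1 of~\cite{blggU2} to $(r,\mu)$, taking $(r',\mu')$ as the witnessing congruent automorphic representation. Its hypotheses are met: $(r,\mu)$ is regular algebraic, irreducible and polarized; $r|_{G_{F_v}}$ is potentially diagonalizable for $v\mid l$ by~\eqref{pdiag}; $\barr(G_{F(\zeta_l)})$ is adequate with $\zeta_l\notin F$ by~(2); and $\overline{r'}\cong\barr$ with $r'|_{G_{F_v}}$ connected to $r|_{G_{F_v}}$ at each $v\mid l$ by the previous step. The conclusion of Theorem A.4.1 then gives that $(r,\mu)$ is potentially diagonalizably automorphic of level potentially prime to $l$, as required.

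Since both hard ingredients are imported wholesale, there is no serious obstacle internal to this appendix; the only points demanding care are bookkeeping ones. The key one is to match the ``connectedness'' relation between local lifts that Theorem A.4.1 requires against exactly what Theorem 2.3.1 outputs for the choice $\rho_v=r|_{G_{F_v}}$, so that $r$ and $r'$ really are linked by a potentially diagonalizable congruence at every $v\mid l$. One must also keep track of the polarizations ($\overline{\mu'}=\barmu$ and matching essential self-duality) and confirm that the adequacy hypothesis genuinely replaces $l\ge 2(n+1)$ in both inputs, which holds because Theorem A.4.1 of~\cite{blggU2} is by construction the adequacy analogue of Theorem 4.3.1, and Theorem 2.3.1 rests on~\cite{jack}, where adequacy is the operative largeness condition.
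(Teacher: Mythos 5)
Your two-step skeleton --- produce an automorphic lift of $\barr$ whose local representations at $v\mid l$ connect to those of $r$, then apply an automorphy lifting theorem --- is the paper's skeleton, but you have swapped the roles of the two cited results, so as written neither of your applications matches the statement of the theorem invoked. Theorem 2.3.1 of~\cite{BLGGT} is the automorphy \emph{lifting} theorem (essentially Theorem 7.1 of~\cite{jack}): its input is a congruent automorphic representation $(\pi,\chi)$ with $r_{l,\imath}(\pi)|_{G_{F_v}}\sim r|_{G_{F_v}}$, and its output is automorphy of $(r,\mu)$. Theorem A.4.1 of~\cite{blggU2} is the adequacy-strengthened form of Theorem 4.3.1 of~\cite{BLGGT}, i.e.\ the result that \emph{produces} an automorphic lift of $(\barr,\barmu)$, of level potentially prime to $l$, connecting to prescribed potentially diagonalizable local lifts $\rho_v$. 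The correct order is therefore the reverse of yours: apply Theorem A.4.1 of~\cite{blggU2} with $\rho_v=r|_{G_{F_v}}$ to obtain $(\pi,\chi)$ with $r_{l,\imath}(\pi)|_{G_{F_v}}\sim r|_{G_{F_v}}$ at \emph{all} finite places $v$ (not only $v\mid l$: the lifting theorem also needs control at the ramified places away from $l$, which your version never addresses), and then conclude by Theorem 2.3.1 of~\cite{BLGGT}.

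Beyond the mislabelling there are two genuine gaps. First, you never handle the case in hypothesis~(\ref{auto}) where $(\barr,\barmu)$ is only \emph{ordinarily} automorphic: the lift-producing theorem (4.3.1/A.4.1) requires potentially diagonalizable automorphy as input, and an ordinary congruent $\pi$ cannot simply be fed into it, since its Galois representations at places above $l$ need not even be potentially crystalline; your parenthetical claim that both cases are ``precisely the admissible inputs'' is false. The paper needs a separate argument here: by Hida theory (Lemma 3.1.4 of~\cite{gg} together with Lemma 1.4.3(1) of~\cite{BLGGT}), every Hida family passes through points whose $l$-adic Galois representations are potentially diagonalizable at all $v\mid l$, which reduces the ordinary case to the potentially diagonalizable one. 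Second, you omit the initial solvable base change (Lemma 2.2.2 of~\cite{BLGGT}, over a CM extension linearly disjoint from $\barF^{\ker\barr}(\zeta_l)$) arranging that all places dividing $l$ and all places at which $r$ ramifies are split over $F^+$; both cited theorems require this splitness, so without that reduction their hypotheses are simply not met.
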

\begin{proof} By Lemma 2.2.2 of~\cite{BLGGT} (base change) 
   it is enough to prove the theorem after replacing $F$ by a soluble
   CM extension which is linearly disjoint from $\barF^{\ker
     \barr}(\zeta_l)$ over $F$. Thus we can and do suppose that  all primes dividing $l$ and all primes at which  $r$ ramifies are split over $F^+$.

Suppose firstly that we are in the case that
  $(\barr,\barmu)$ is ordinarily automorphic. Then by Hida theory we
  may reduce to the potentially diagonalizably automorphic case,
  because by (for example) Lemma 3.1.4 of~\cite{gg} and Lemma 1.4.3(1)
  of~\cite{BLGGT}, every Hida family passes through points for which
  the associated $l$-adic Galois representation is potentially
  diagonalizable at all places dividing $l$.

Suppose now that we are in the case that $(\barr,\barmu)$ is
  potentially diagonalizably automorphic. Applying Theorem A.4.1
  of~\cite{blggU2} (with $F=F'$, and the $\rho_v$ being $r|_{G_{F_v}}$), we see that there is a regular
  algebraic, cuspidal,  polarized automorphic representation $(\pi,\chi)$ of level potentially prime to $l$, such that
  \[ r_{l,\imath}(\pi)|_{G_{F_v}}\sim r|_{G_{F_v}} \] for each finite
  place $v$ of $F$. The result then follows immediately from Theorem
  2.3.1 of~\cite{BLGGT}.
\end{proof}
\bibliographystyle{amsalpha}
\bibliography{dieulefaitgee}

\providecommand{\bysame}{\leavevmode\hbox to3em{\hrulefill}\thinspace}
\providecommand{\MR}{\relax\ifhmode\unskip\space\fi MR }
\providecommand{\MRhref}[2]{%
  \href{http://www.ams.org/mathscinet-getitem?mr=#1}{#2}
}
\providecommand{\href}[2]{#2}
\begin{thebibliography}{BLGGT10}

\bibitem[BLGG11]{blggU2}
Tom Barnet-Lamb, Toby Gee, and David Geraghty, \emph{Serre weights for rank two
  unitary groups}, Preprint, 2011.

\bibitem[BLGGT10]{BLGGT}
Tom Barnet-Lamb, Toby Gee, David Geraghty, and Richard Taylor, \emph{Potential
  automorphy and change of weight}, Preprint, 2010.

\bibitem[GG12]{gg}
Toby Gee and David Geraghty, \emph{Companion forms for unitary and symplectic
  groups}, Duke Math. J. \textbf{161} (2012), no.~2, 247--303. \MR{2876931}

\bibitem[GHTT10]{jackapp}
Robert Guralnick, Florian Herzig, Richard Taylor, and Jack Thorne,
  \emph{Adequate subgroups}, Appendix to \cite{jack}, 2010.

\bibitem[Tho12]{jack}
Jack Thorne, \emph{On the automorphy of $l$-adic {G}alois representations with
  small residual image}, Journal of the Institute of Mathematics of Jussieu
  (2012), 1–--66.

\end{thebibliography}
\end{document}